\newtheorem{lemma}{Lemma}[section]
\newtheorem{theorem}[lemma]{Theorem}
\newtheorem{corollary}[lemma]{Corollary}
\newtheorem{proposition}[lemma]{Proposition}
\theoremstyle{remark}
\theoremstyle{definition}
\DeclareMathOperator*{\spec}{Spec} 
\DeclareMathOperator*{\card}{card} 
\DeclareMathOperator*{\Gal}{Gal}
\newcommand{\ZZ}{\mathbb{Z}} \newcommand{\Aff}{\mathbb{A}}
\newcommand{\PP}{\mathbb{P}}
 \newcommand{\g}{\mathrm{g}}
\newcommand{\onto}{\twoheadrightarrow}
\renewcommand{\phi}{\varphi}
\def\normal{\triangleleft}
\def \PP {{\mathbb P}}
\newcounter{Lcounter}
\renewcommand{\theLcounter}{\theequation\alph{Lcounter}}
\let\oldmarginpar\marginpar
\renewcommand\marginpar[1]{\-\oldmarginpar[\raggedleft\footnotesize #1]%
{\raggedright\footnotesize #1}}
\begin{document}

\title{Subgroup structure of fundamental groups in positive characteristic }%

\author{Lior Bary-Soroker}%
\address{Instituts f\"ur Experimentelle Mathematik, Universit\"at Duisburg-Essen}
\email{barylior@post.tau.ac.il}
\author{Manish Kumar}
\address{Universit\"at Duisburg-Essen,
Fakult\"at f\"ur Mathematik}
\email{manish.kumar@uni-due.de}

\subjclass[2010]{14H30, 14G17, 14G32}
\date{\today}%
\begin{abstract}
Let $\Pi$ be the \'etale fundamental group of a smooth affine curve over an algebraically closed field of characteristic $p>0$. We establish a criterion for profinite freeness of closed subgroups of $\Pi$.  Roughly speaking, if a closed subgroup of $\Pi$ is ``captured'' between two normal subgroups, then it is free, provided it contains most of the open subgroups of index $p$. In the proof we establish a strong version of ``almost $\omega$-freeness'' of $\Pi$ and then apply the Haran-Shapiro induction. 
\end{abstract}
\maketitle

\section{Introduction}
The \'etale fundamental group of a variety over a field of characteristic $p>0$ is a mysterious group. Even in the case of smooth affine curves it is not yet completely understood. The prime-to-$p$ part of the group is understood because of Grothendieck's Riemann Existence Theorem \cite[XIII, Corollary 2.12]{SGA1}. The main difficulty arises from the wildly ramified covers.

Let $C$ be a smooth affine curve over an algebraically closed field $k$ of characteristic $p>0$. Let $X$ be the smooth completion of $C$, $g$ the genus of $X$ and $r=\card(X\smallsetminus C)$.
Let $\pi_1(C)$ be the \'etale fundamental group of $C$. This is a profinite group and henceforth the terminology should be understood in the category of profinite groups, e.g.\ subgroups are closed, homomorphisms are continuous, etc.
 
Abhyankar's conjecture which was proved by Raynaud \cite{Raynaud1994} and Harbater \cite{Harbater1994} classifies the finite quotients of $\pi_1(C)$, namely a finite group $G$ is a quotient of $\pi_1(C)$ if and only if $G/p(G)$ is generated by $2g+r-1$ elements. Here $p(G)$ is the subgroup of $G$ generated by all the $p$-sylow subgroups of $G$.
In particular, $\pi_1(C)$ is not finitely generated, hence it is not determined by the set of its finite quotients.

Recently people have tried to understand the structure of $\pi_1(C)$ by studying its subgroups. In \cite{Kumar2008} the second author shows that the commutator subgroup is free of countable rank, provided $k$ is countable.
In \cite{PachecoStevensonZalesskii2009} Pacheco, Stevenson and Zalesskii make an attempt to understand the normal subgroups $N$ of $\pi_1(C)$, 
again when $k$ is countable. In \cite{HarbaterStevenson2011} Harbater and Stevenson show that $\pi_1(C)$ is almost $\omega$-free, in the sense that every finite embedding problem has a solution after restricting to an open subgroup (see also \cite{Jarden2011}). 
If $k$ is uncountable, the result of \cite{Kumar2008} is extended in \cite{Kumar2009}. We prove a diamond theorem for $\pi_1(C)$ when $k$ is countable, Theorem~\ref{thm:main} below, by using Theorem~\ref{thm:pi_1} below which strengthens \cite[Theorem~6]{HarbaterStevenson2011}. 

Let us explain the main result of this paper in detail. 
We say that a subgroup $H$ of a profinite group $\Pi$ lies in a $\Pi$-diamond if there exist $M_1,M_2\lhd \Pi$ such that $M_1\cap M_2\leq H$, but $M_1\not\leq H$ and $M_2\not\leq H$. Haran's diamond theorem states that a subgroup $H$ of a free profinite group of infinite rank that lies in a diamond is free \cite{Haran1999a}. In \cite{Bary-Soroker2006} the first author extends the diamond theorem to free profinite groups of finite rank $\geq 2$. Many subgroups lie in a diamond, notably, the commutator and any proper open subgroup of a normal subgroup. 

The diamond theorem is general in the sense that most of the other criteria follow from it, and it applies also to non-normal subgroups, in contrast to Melnikov's theorem. Moreover, the diamond theorem can be extended to Hilbertian fields \cite{Haran1999b} and to other classes of profinite groups. 
 In \cite{Bary-SorokerHaranHarbater2010} Haran, Harbater, and the first author prove a diamond theorem for semi-free profinite groups, and in \cite{Bary-SorokerStevensonZalesskii2010} Stevenson, Zalesskii and the first author establish a diamond theorem for $\pi_1(C)$, where $C$ is a projective curve of genus at least $2$ over an algebraically closed field of characteristic $0$. 

For each $g\ge 0$, we define a subgroup of $\pi_1(C)$,
$$P_g(C)=\bigcap\{\pi_1(Z)\mid Z\to C \text{ is \'etale $\ZZ/p\ZZ$-cover and the genus of }Z\ge g \}.$$ 
We note that $P_0(C)$ is the intersection of all open normal subgroups of index $p$ and we have $P_{g+1}(C)\geq P_g(C)$.

Our main result is the following diamond theorem for subgroups of $\pi_1(C)$ that are contained in $P_g(C)$ for some $g\geq 0$.

\begin{theorem}\label{thm:main}
 Let $k$ be a countable algebraically closed field of characteristic $p>0$, let $C$ be a smooth affine $k$-curve, let $\Pi=\pi_1(C)$, let $g$ be a non-negative integer, and let $M$ be a subgroup of $\Pi$. Assume $M\leq P_g(C)$ and there exist normal subgroups $M_1,M_2$ of $\Pi$ such that $M$ contains $M_1\cap M_2$ but contain neither $M_1$ or $M_2$. Then:
 \begin{enumerate}\renewcommand{\theenumi}{\roman{enumi}}
 \item For every finite simple group $S$ the direct power $S^\infty$ is a quotient of $M$. \label{part:main_a}
 \item \label{part:main_b} Assume further that $[M M_i:M] \neq p$ for $i=1,2$. Then $M$ is free of countable rank.  
 \end{enumerate}
\end{theorem}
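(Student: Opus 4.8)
The plan is to deduce part~(\ref{part:main_b}) from Iwasawa's freeness criterion: a projective profinite group of rank at most $\aleph_0$ is free of countable rank once every finite split embedding problem over it is \emph{properly} (surjectively) solvable. Two of the three ingredients come for free. Projectivity of $M$ follows from that of $\Pi=\pi_1(C)$, which holds because a smooth affine curve has $\cd(\Pi)\le 1$, and projectivity passes to closed subgroups. Countability of $\rank(M)$ follows from $\rank(\Pi)\le\aleph_0$, itself a consequence of $k$ being countable, so that $C$ admits only countably many finite connected étale covers. It therefore remains to solve properly an arbitrary finite split embedding problem
$$\mu\colon M\onto A,\qquad \alpha\colon B=\Gamma\rtimes A\onto A$$
over $M$, with $\Gamma\ne 1$.

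The engine is the Haran--Shapiro induction, which I would use to transport this problem up to $\Pi$. From the diamond data $M_1\cap M_2\le M$ and $M_i\not\le M$, I would build a twisted wreath product $\Gamma\wre_{M}\Pi$ and, out of $\mu$ and $\alpha$, an induced finite embedding problem over $\Pi$; the diamond hypothesis is precisely what makes the Haran--Shapiro correspondence send proper solutions of the induced problem over $\Pi$ to proper solutions of the original problem over $M$. Since $\Pi$ is only \emph{almost} $\omega$-free, the induced problem may fail to be solvable over $\Pi$ itself, and here Theorem~\ref{thm:pi_1} is the decisive input: it furnishes a solution after restriction to a suitable open subgroup $U\le\Pi$, i.e.\ over $\pi_1(\widetilde C)$ for some connected étale cover $\widetilde C\to C$. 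The strong form of almost $\omega$-freeness is what allows $\widetilde C$ to be chosen of arbitrarily large genus, and this is exactly where the hypothesis $M\le P_g(C)$ enters: because $P_g(C)$ captures every $\ZZ/p\ZZ$-cover of genus $\ge g$, the subgroup $M$ still surjects onto the quotients that appear inside such a high-genus cover, so the upstairs solution can be pulled back through the wreath product.

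The main obstacle, and the reason for the extra hypothesis in part~(\ref{part:main_b}), is ensuring that the pulled-back solution is \emph{surjective} onto $B=\Gamma\rtimes A$ rather than merely a weak solution. Through the Haran--Shapiro correspondence this surjectivity reduces to a non-degeneracy of the diamond as seen by the kernel $\Gamma$: one must prevent the restricted solution from collapsing along a single $\ZZ/p\ZZ$-direction, which is exactly the pathology that wild, $p$-power-order ramification permits in characteristic $p$. The hypothesis $[MM_i:M]=[M_i:M_i\cap M]\ne p$ forbids this: it guarantees that each $M_i$ meets $M$ in index different from $p$, so the two normal subgroups genuinely separate the wreath components and the induced solution survives restriction as an epimorphism. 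I expect reconciling the open restriction forced by ``almost'' $\omega$-freeness with genuine proper solvability over $M$ --- deploying $M\le P_g(C)$ and the index condition in tandem --- to be the technically hardest point.

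For part~(\ref{part:main_a}) I would run the same induction with a simple kernel $S$ and trivial base $A=1$, so that a proper solution is just an epimorphism $M\onto S$. Here one only needs, at each stage $n$, a single fresh such epimorphism independent of the previous ones; the diamond together with Theorem~\ref{thm:pi_1} supplies these directly, and the degenerate index-$p$ case is harmless because there is no nontrivial base group to be matched through the wreath product. Assembling countably many independent epimorphisms yields $M\onto S^n$ for every $n$, and hence $S^\infty$ as a quotient of $M$, with no need for the hypothesis on $[MM_i:M]$.
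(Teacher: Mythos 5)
Your outline for Part~(\ref{part:main_b}) matches the paper's route: reduce via projectivity of $\Pi$ and $\rank(M)\le\aleph_0$ to properly solving finite split embedding problems for $M$, then use the Haran--Shapiro induction (a twisted wreath product $A\wr_{G_0}G$ over a finite quotient $G=\Pi/L$, with descent governed by \cite[Proposition~4.6]{Bary-SorokerHaranHarbater2010}) together with Theorem~\ref{thm:pi_1}. One point where your mechanism is off: the hypothesis $M\le P_g(C)$ does not enter because ``$M$ still surjects onto the quotients that appear inside such a high-genus cover''; it enters because Theorem~\ref{thm:pi_1} produces $U$ \emph{normal of index exactly $p$}, corresponding to a $\ZZ/p\ZZ$-cover of genus $\ge g$, so that by the very definition of $P_g(C)$ one has $P_g(C)\le U$ and hence $M\le U$. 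This containment is what lets the whole induction be transplanted from $\Pi$ to $U$; it also explains why the bound on the index of $U$ (absent from Harbater--Stevenson's and Jarden's almost $\omega$-freeness) is indispensable, and why the paper chooses $L$ with quantitative margins ($[M_1ML:ML]\ge 3p$, $[M_2ML:ML]\ne 1,p$, $[\Pi:ML]\ge 3p$): intersecting the diamond with the index-$p$ subgroup $U$ degrades each index by a factor of at most $p$, and the margins are exactly what keep the hypotheses of \cite[Proposition~4.6]{Bary-SorokerHaranHarbater2010} intact. Your account of surjectivity (``the normal subgroups separate the wreath components'') is vague but consistent with the paper's commutator argument via \cite[Lemma 13.7.4(a)]{FriedJarden2008}.

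The genuine gap is in Part~(\ref{part:main_a}). Your claim that the degenerate case $[MM_2:M]=p$ is ``harmless because there is no nontrivial base group to be matched through the wreath product'' is false. Even with $G_1=1$ and $A=S^n$, the descent criterion requires $K_2=\phi(M_2\cap U)\not\le G_0$, i.e.\ the diamond must survive intersection with $U$ --- a condition on the $K_i$ relative to $G_0$ in which the triviality of $G_1$ plays no role. And it genuinely can fail: since $M\le P_g(C)\le U$ and $[\Pi:U]=p$, if $[M_2:M_2\cap M]=p$ and $M_2\not\le U$ then $M_2\cap M=M_2\cap U$, so inside $U$ the diamond collapses on the $M_2$ side ($A_2$ can equal $1$). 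The paper handles Part~(\ref{part:main_a}) by two separate arguments that your proposal does not supply. If $p\mid|S|$, then $A=S^n$ is quasi-$p$ and Pop's theorem \cite[Theorem~B]{pop} solves the induced problem over $\Pi$ itself, so one takes $U=\Pi$, $M_2\le U$ holds trivially, and the $\ne p$ hypothesis is never invoked. If $p\nmid|S|$ and $M_2\not\le U$, the wreath machinery is abandoned altogether: choose $U$ (via Theorem~\ref{thm:pi_1}) with genus $g_U$ so large that the prime-to-$p$ quotient of $U$, free of rank $\geq 2g_U$ by Grothendieck's Riemann Existence Theorem, surjects onto $S^{\nu+n}$, where $\nu$ is maximal with $S^\nu$ a quotient of $\Pi$; the image $\Gamma$ of the normal subgroup $M_2\cap U$ is then a direct factor $S^{\nu_1}$ whose complement is a quotient of $U/(M_2\cap U)\cong\Pi/M_2$, hence of $\Pi$, forcing $\nu_1\ge n$; and since $M_2\cap U=M_2\cap M\le M$, the group $M$ itself surjects onto $\Gamma\onto S^n$. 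Your plan of ``one fresh epimorphism at each stage'' gives no source for these epimorphisms precisely in the situation where the diamond collapses inside $U$, which is the only hard case of Part~(\ref{part:main_a}).
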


This generalizes \cite[Theorem 4.8]{Kumar2008}. Let $\Pi'$ be the commutator subgroup of $\Pi$. Then $\Pi'\leq P_0(C)$, so $\Pi/\Pi'$ is a non finitely generated  abelian profinite group, hence $\Pi/\Pi' = A\times B$ with $A,B$ infinite. So $\Pi'$ is the intersection of the preimages of $A$ and $B$ in $\Pi$, hence lies in a diamond as in the above theorem. Since $A$ and $B$ are infinite, the hypothesis of Theorem \ref{thm:main}(\ref{part:main_b}) also holds for $\Pi'$.

We note that Theorem~\ref{thm:main} implies that for every $g\geq 0$ the subgroup $P_g(C)$ is free (cf.\ \cite{Kumar2009}). However we do not know whether Theorem~\ref{thm:main} follows from this latter assertion, because it is not clear that $M$ lies in a $P_g(C)$-diamond even if it contained in a $\pi_1(C)$-diamond.

Although a normal subgroup $N$ of $\pi_1(C)$ of infinite index is not necessarily free, every proper open subgroup of $N$ is free, provided it is contained in $P_g(C)$ for some $g$ (cf.\ Lubotzky-v.d.~Dries' theorem \cite[Proposition 24.10.3]{FriedJarden2008}). 
\begin{corollary}\label{cor:LD}
 Let $k$ be a countable algebraically closed field of characteristic $p>0$, let $C$ be a smooth affine $k$-curve, let $\Pi=\pi_1(C)$. Let $N$ be a normal subgroup of $\Pi$ of infinite index and let $M$ be a proper open subgroup of $N$ that is contained in $P_g(C)$ for some $g\geq 0$. Then $M$ is free. 
\end{corollary}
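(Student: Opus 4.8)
The plan is to deduce the corollary from Theorem~\ref{thm:main} by exhibiting a $\Pi$-diamond around $M$ whose walls have index different from $p$. Since $M$ is open in $N$ and the subgroups $V\cap N$ with $V\lhd\Pi$ open and normal form a neighbourhood basis of the identity in $N$, I first choose an open normal subgroup $U\lhd\Pi$ with $U\cap N\le M$. I then set $M_1=N$ and $M_2=U$. Both are normal in $\Pi$; we have $M_1\cap M_2=N\cap U\le M$, while $M_1=N\not\le M$ because $M\subsetneq N$, and $M_2=U\not\le M$ because $U$ is open whereas $[\Pi:M]\ge[\Pi:N]=\infty$. Thus $M$ lies in a $\Pi$-diamond and $M\le P_g(C)$, so Theorem~\ref{thm:main}(\ref{part:main_a}) already shows that $S^\infty$ is a quotient of $M$ for every finite simple group $S$; the work is to reach the freeness of part~(\ref{part:main_b}).

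For this I must check the condition $[MM_i:M]\ne p$. The wall $M_2$ never causes trouble: from $U\cap N\le M\le N$ one gets $U\cap M=U\cap N$, so $[MM_2:M]=[U:U\cap N]=[UN:N]$, and since $UN$ is open in $\Pi$ while $N$ has infinite index, $[UN:N]=\infty\ne p$. Hence $M_2$ satisfies the hypothesis automatically, and the only quantity to control is $[MM_1:M]=[N:M]$. When $[N:M]\ne p$ the pair $(N,U)$ is a diamond of the kind required by Theorem~\ref{thm:main}(\ref{part:main_b}), and the freeness of $M$ follows at once.

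The main obstacle is therefore the degenerate case $[N:M]=p$. Here replacing $N$ by any normal subgroup $M_1\lhd\Pi$ with $M_1\le N$ and $M_1\not\le M$ does not help: then $MM_1$ is a subgroup with $M\subsetneq MM_1\le N$, and as $[N:M]=p$ is prime this forces $MM_1=N$, whence $[MM_1:M]=p$ once more. So at least one wall must be chosen transversal to $N$. When $\Pi/N$ possesses a nontrivial finite normal subgroup $N'/N$, I would take $M_1=N'$: then $N\lneq N'\lhd\Pi$ with $[N':N]$ finite, so $[\Pi:N']=\infty$, the subgroup $M$ is still proper and open in $N'$, and $[MM_1:M]=[N':M]=[N':N]\,p\ne p$ after re-choosing $U$ with $U\cap N'\le M$. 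The remaining, genuinely hard, situation is when $\Pi/N$ has neither a nontrivial finite normal subgroup nor two nontrivial normal subgroups meeting trivially (as for $\Pi/N\cong\ZZ_p$); producing two transversal $\Pi$-normal subgroups whose intersection lies in $M$ then amounts to finding a suitable diamond in $\Pi/N$ compatible with the finite quotient $\Pi/\mathrm{Core}_\Pi(M)$. I expect this last step to be forced by the hypothesis $M\le P_g(C)$ together with the strong almost $\omega$-freeness of $\pi_1(C)$: the order-$p$ quotient $N/M$ cannot be realised by a $\ZZ/p\ZZ$-cover of $C$ of genus $\ge g$ without violating $M\le P_g(C)$, and detaching it transversally is where the real effort lies.
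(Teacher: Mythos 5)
Your first step coincides with the paper's: choosing $U\lhd\Pi$ open with $U\cap N\le M$ and noting $[UM:M]=\infty$, so that Theorem~\ref{thm:main}(\ref{part:main_b}) settles the case $[N:M]\neq p$; your computation $U\cap M=U\cap N$, hence $[MM_2:M]=[UN:N]=\infty$, is a correct (slightly sharper) version of the paper's remark. But the proposal then leaves the essential case $[N:M]=p$ unproved. Your own text concedes this (``I expect this last step to be forced\dots''), and the direction you sketch --- manufacturing two $\Pi$-normal walls transversal to $N$, i.e.\ a diamond in $\Pi/N$ compatible with $\Pi/{\rm Core}_\Pi(M)$ --- is not how the difficulty is resolved, and there is no reason such walls exist in general (your own example $\Pi/N\cong\ZZ_p$ illustrates the obstruction: any two closed subgroups with nontrivial image there have intersection issues you do not control). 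Your intermediate subcase, enlarging $N$ to $N'\lhd\Pi$ with $N'/N$ finite nontrivial, is correct but is merely a reduction back to the index-$\neq p$ case and does not touch the hard situation. So the proposal has a genuine gap exactly where the real content of the corollary lies.

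The missing idea is to stop searching for a better $\Pi$-diamond and instead work inside $P_g(C)$, which is itself free of countable rank (as noted after Theorem~\ref{thm:main}, this is a consequence of that theorem; cf.\ also \cite{Kumar2009}), using two purely group-theoretic results about free profinite groups. When $[N:M]=p$ the paper splits into two cases. If $N\le P_g(C)$, then $M$ lies in the $P_g(C)$-diamond determined by $N$ and $U\cap P_g(C)$ (both normal in $P_g(C)$, neither contained in $M$, intersection inside $M$), and Haran's diamond theorem for free profinite groups of infinite rank applies directly --- crucially, that theorem carries no condition on the indices $[MM_i:M]$, so the value $p$ is harmless. If $N\not\le P_g(C)$, then since $M\le P_g(C)\cap N\lneqq N$ and $[N:M]=p$ is prime, one is forced to have $M=P_g(C)\cap N$; hence $M$ is normal in $P_g(C)$ ($P_g(C)$ being an intersection of normal subgroups of $\Pi$, so $P_g(C)\lhd\Pi$). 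Now Theorem~\ref{thm:main}(\ref{part:main_a}) --- which, as you observed, does apply to $M$ with the diamond $(N,U)$ --- shows $S^\infty$ is a quotient of $M$ for every finite simple group $S$, and Melnikov's characterization of free normal subgroups of free profinite groups yields that $M$ is free. This two-pronged use of the freeness of $P_g(C)$, rather than any refinement of the diamond in $\Pi$, is what your proposal is missing.
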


A stronger form of this corollary appears in \cite{PachecoStevensonZalesskii2009}. There instead of assuming $M\leq P_g(C)$, it is assumed that $\Pi/N$ has a 'big' $p$-sylow subgroup. However there is a gap in \cite{PachecoStevensonZalesskii2009}. In a private communication the authors of \cite{PachecoStevensonZalesskii2009} renounced their main result. The gap can be fixed under the assumption that $N\leq P_g(C)$. 
Note that under the assumption $N\leq P_g(C)$, the assertion of Corollary \ref{cor:LD} follows from Lubotzky-v.d.~Dries' theorem for free groups because $P_g(C)$ is free. But in general, it seems that one cannot apply the group theoretical theorem directly. 

If $[M:N]\neq p$ the corollary follows directly from Theorem~\ref{thm:main} Part~\eqref{part:main_b}. Indeed, 
since $M$ is open in $N$, there exists an open normal subgroup $\tilde{N}$ of $\Pi$ such that $N\cap \tilde{N}\leq M$. Thus $M$ is in a diamond with the extra properties, as needed. For the general case we need the diamond theorem for free groups,  Melnikov's theorem on normal subgroups of free groups, and Theorem~\ref{thm:main} Part\eqref{part:main_a}. See \S\ref{pf:cor}.

The proof of Theorem~\ref{thm:main} contains two key ingredients. 
We prove a geometric result on solvability of embedding problems for $\pi_1(C)$. For this we need a piece of notation. An open subgroup $\Pi^{o}$ of $\Pi$ is the \'etale fundamental group of an \'etale cover $D$ of $C$. We say $\Pi^{o}$ \emph{corresponds to a curve of genus $\g$} if the genus of the smooth completion of $D$ is $\g$. 

\begin{theorem}\label{thm:pi_1}
 Let $k$ be an algebraically closed field of characteristic $p > 0$, 
 let $C$ be a smooth affine $k$-curve, $\Pi = \pi_1(C)$, $g$ a positive integer, and $\mathcal{E}=(\mu\colon \Pi \to G, \alpha \colon \Gamma\to G)$ a finite embedding problem for $\Pi$.  Then there exists an open normal index $p$ subgroup $\Pi^o$ of $\Pi$ such that:
 \begin{enumerate}
  \item $\mu(\Pi^o) = G$.
  \item $\Pi^o$ corresponds to a curve of genus at least $\g$.
  \item The restricted embedding problem $(\mu|_{\Pi^o}: \Pi^o \to G, \alpha: \Gamma\to G)$ is properly solvable.
 \end{enumerate}
\end{theorem}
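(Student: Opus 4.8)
The plan is to interpret the embedding problem geometrically and to build $\Pi^o$ as a \emph{single} wildly ramified $\ZZ/p\ZZ$-cover whose genus can be made as large as we please. Since $\mu$ is surjective it corresponds to a connected $G$-Galois \'etale cover $Y\to C$, and a normal subgroup $\Pi^o$ of index $p$ corresponds to a connected \'etale $\ZZ/p\ZZ$-cover $D\to C$. The first point is that $D$ can be chosen so that $Y\times_C D$ is connected: the number of connected components of $Y\times_C D$ equals $[G:\mu(\Pi^o)]$, so connectedness is exactly condition (1). As $C$ is affine, Artin--Schreier theory identifies the degree-$p$ covers of $C$ with the nonzero classes in $\cO(C)/\{t^p-t : t\in\cO(C)\}$, an infinite-dimensional $\FF_p$-vector space, whereas the covers dominated by $Y$ come from the finitely many homomorphisms $G\to\ZZ/p\ZZ$ and hence fill out only a finite-dimensional subspace. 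First I would therefore pick the Artin--Schreier function $f$ defining $D$ (by $y^p-y=f$) to lie outside this finite-dimensional subspace, which forces $\mu(\Pi^o)=G$.

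Next I would arrange condition (2). The genus of the smooth completion of $D$ is governed, via Riemann--Hurwitz and the Artin--Schreier conductor, by the pole orders of $f$ at the points of $X\smallsetminus C$; taking $f$ with sufficiently large pole order makes the genus exceed $\g$. The two requirements are compatible, since translating $f$ by an element of the finite-dimensional ``bad'' subspace does not change its pole order, so high-pole-order functions avoiding the bad subspace are plentiful. This is precisely the mechanism that upgrades \cite[Theorem~6]{HarbaterStevenson2011}: a single wild $\ZZ/p\ZZ$-cover already supplies unbounded genus, so the open subgroup needed to solve the embedding problem can be taken of index $p$.

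It remains to solve the restricted embedding problem $(\mu|_{\Pi^o}\colon\pi_1(D)\to G,\ \alpha\colon\Gamma\to G)$, that is, to dominate the connected $G$-cover $Y'=Y\times_C D\to D$ by a connected $\Gamma$-Galois cover \'etale over $D$. I would proceed by d\'evissage on $N=\ker\alpha$, reducing to the case where $N$ is a minimal normal subgroup of $\Gamma$ and composing the partial solutions; the large genus of $D$ (and hence of the intermediate covers) should give enough room that every step is carried out over the same $D$. When $N$ is quasi-$p$ (i.e.\ $N=p(N)$) the step is handled by formal/rigid patching over the affine curve, producing the required connected cover \'etale over $D$ \cite{Raynaud1994,Harbater1994}. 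When $N$ has order prime to $p$ one uses tame covers instead: Grothendieck's description \cite{SGA1} shows the prime-to-$p$ fundamental group of $Y'$ is free profinite of rank $2\g_{Y'}+r_{Y'}-1$, which is large once the genus is large, and a free group of sufficiently large rank solves the corresponding $G$-equivariant embedding problem over $Y'$, yielding a connected \'etale lift.

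The main obstacle is this last step and, above all, controlling the three conditions simultaneously over a single index-$p$ cover. Keeping $\mu|_{\Pi^o}$ surjective, the genus large, and the embedding problem solvable all favor a wild cover of large genus and so pull in the same direction, but the genuine content lies in the patching construction for the quasi-$p$ part together with the bookkeeping needed to ensure that the d\'evissage produces one \emph{connected} $\Gamma$-cover \'etale over $D$, rather than merely a weak or disconnected solution. I expect that verifying connectedness at each stage---equivalently surjectivity of the final $\beta\colon\pi_1(D)\to\Gamma$---and checking that the room afforded by the fixed curve $D$ suffices uniformly across all steps of the d\'evissage, will require the most care.
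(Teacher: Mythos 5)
Your treatment of conditions (1) and (2) is correct, and for those two points it is actually more elementary than the paper's: the degree-$p$ subcovers of the $G$-cover attached to $\mu$ correspond to the finitely many nontrivial homomorphisms $G\to\ZZ/p\ZZ$, so their Artin--Schreier classes span a finite subgroup of $\cO(C)/\wp(\cO(C))$ with bounded reduced pole orders, and any $f$ with a large pole order prime to $p$ gives a cover $D$ satisfying both (1) and (2). The genuine gap is in condition (3), at the prime-to-$p$ step of your d\'evissage. Solving $(\mu|_{\Pi^o},\alpha)$ with prime-to-$p$ kernel $N$ over your \emph{fixed} $D$ is an embedding problem for the full profinite group $\pi_1(D)$, not for $\pi_1(Y')$: you must produce an $N$-cover $Z\to Y'$ together with isomorphisms $g^*Z\cong Z$ for all $g\in G$ satisfying a cocycle condition, so that the composite $Z\to D$ is $\Gamma$-Galois. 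The maximal prime-to-$p$ quotient of $\pi_1(Y')$ is free of rank $2g_{Y'}+r_{Y'}-1$ only in the category of prime-to-$p$ groups (not free profinite), and more importantly its freeness and large rank give you many $N$-covers of $Y'$ but no control whatsoever over the $G$-equivariance, since the surjection $\pi_1(D)\to G$ can be wildly ramified and does not factor through any prime-to-$p$ quotient. No cited result (Grothendieck's Riemann existence, Raynaud, Harbater, Pop) yields this equivariant solvability over a preassigned curve; a statement of the form ``every split embedding problem for $\pi_1(D)$ with prime-to-$p$ kernel becomes properly solvable once the genus of $D$ is large'' is not in the literature and is essentially as hard as the theorem itself --- indeed this is exactly the kind of difficulty behind the gap in \cite{PachecoStevensonZalesskii2009} that the introduction of the paper discusses.

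The paper avoids this by reversing your order of quantifiers: it does not choose the index-$p$ cover first and then solve, but constructs the cover \emph{together with} the solution. After using projectivity of $\Pi$ to reduce to a split problem, the kernel is factored just once into its quasi-$p$ part $p(H)$ and the prime-to-$p$ quotient $H/p(H)$ (rather than a d\'evissage over minimal normal subgroups, whose intermediate problems would moreover fail to split); the quasi-$p$ part is handled by \cite[Theorem B]{pop} over whatever $\Pi^o$ has already been found, so only the prime-to-$p$ part requires geometry. For it, the $\ZZ/p\ZZ$-cover is taken to be a smooth fiber $T_s$ of a family $T$ over $k[[t]]$ (generically given by $Z^p-Z-(x/t)^r$ with $t=xy$) whose degenerate fiber contains an auxiliary Artin--Schreier curve of genus $(p-1)(r-1)/2\geq\max\{|H|,g\}$; a prime-to-$p$ $H$-cover of that auxiliary curve (which exists by Riemann existence precisely because its genus is large) is glued to the pullback of the $G$-cover by formal patching \cite[Proposition 6.4]{Kumar2008}, via the induced cover $\Ind_G^\Gamma$, and then specialized to a good closed fiber by \cite[Proposition 6.9]{Kumar2008}. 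The equivariance and connectedness that you flag as ``bookkeeping'' are exactly what this patching construction is engineered to deliver, and they are what your plan, with $D$ fixed in advance independently of the embedding problem, has no mechanism to produce.
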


Harbater and Stevenson prove that $\Pi=\pi_1(C)$ is almost $\omega$-free, i.e., there exists an open normal subgroup $\Pi^{o}$ satisfying (1) and (3) \cite[Theorem~6]{HarbaterStevenson2011}, however no bound on the index of $\Pi^{o}$ is given. The proof of \cite[Theorem~6]{HarbaterStevenson2011} is based on ``adding branch points''. In \cite{Jarden2011} Jarden proves that a profinite group $\Gamma$ is almost-$\omega$ free if the mild condition that $A_n^m$ is a quotient of $\Gamma$ for all sufficiently large $n$ and $m$ (here $A_n$ is the alternating group). Since $A_n^m$ is quasi-$p$, if $n\geq p$, By Abhyankar's conjecture, $A_n^m$ is a quotient of $\pi_1(C)$, hence Jarden's result implies  \cite[Theorem~6]{HarbaterStevenson2011}. We note that both Harbater-Stevenson's and Jarden's methods do not give any bound on the index of $\Pi^{o}$ in $\Pi$, which is essential for applications, e.g.\ to the proof of Theorem~\ref{thm:main}. 
Our proof relies on the constructions in \cite{Kumar2008}.

The second ingredient is the Haran-Shapiro induction, see \cite{Bary-SorokerHaranHarbater2010}.

\section*{Acknowledgments}
The authors thank David Harbater and Kate Stevenson for useful discussions. 

The first author is a Alexander von Humboldt fellow. 

\section{Solutions of embedding problems for the fundamental group}
An \emph{embedding problem} $\mathcal{E}=(\mu: \Pi \to G, \alpha: \Gamma\to G)$ for a profinite group $\Pi$ consists of profinite group epimorhisms $\mu$ and $\alpha$. We call $H=\ker\alpha$ the \emph{kernel} of the embedding problem. 
\[
 \xymatrix{
& & & \Pi\ar@{->>}[d]^{\mu}\ar@{-->}[ld]_{\phi}\\
 1\ar[r]&H\ar[r]& \Gamma\ar[r]^{\alpha}& G\ar[r]&1}
\]
The embedding problem is \emph{finite} (resp.\ \emph{split}) if $\Gamma$ is finite (resp.\ $\alpha$ splits). 
A \emph{weak solution} of $\mathcal{E}$ is a homomorphism $\phi\colon \Pi\to \Gamma$ such that $\alpha\circ\phi=\mu$. A \emph {(proper) solution} is a surjective weak solution $\phi$.

Let $k$ be an algebraically closed field of characteristic $p>0$. Let $C$ be a smooth affine $k$-curve, $\Pi = \pi_1(C)$, and $ (\mu: \Pi \to G, \alpha: \Gamma\to G)$ a finite embedding problem for $\Pi$. Let $H$ be the kernel of $\alpha$. 

\subsection{Prime-to-$p$ kernel}
In the rest of the section we shall prove the existence of solutions of finite embedding problems with prime-to-$p$ kernels when restricting to a normal subgroup of index $p$:

\begin{proposition}\label{pro:prime-to-p}
Let $g$ be a positive integer and let $\mathcal{E}=(\mu: \Pi \to G, \alpha: \Gamma\to G)$ be a finite split embedding problem for $\Pi$ with $H=\ker\alpha$ prime-to-$p$. Then there exists an open normal subgroup $\Pi^o$ of $\Pi$ of index $p$ such that
 \begin{enumerate}
  \item $\mu(\Pi^o) = G$;
  \item $\Pi^o$ corresponds to a curve of genus at least $\g$;
  \item The restricted embedding problem,  $ (\mu|_{\Pi^o}: \Pi^o \to G, \alpha: \Gamma\to G)$, is properly solvable.
 \end{enumerate}
\end{proposition}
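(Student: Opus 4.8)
Since the embedding problem splits, the composite $\Pi\xrightarrow{\mu}G\xrightarrow{s}\Gamma$ with a section $s$ of $\alpha$ is a weak solution, and the same holds after restricting to any subgroup; thus the whole content is to make a solution \emph{proper} (surjective) after passing to a suitable index-$p$ subgroup, while arranging the genus bound. The plan is to separate the ``$p$-part'' from the ``prime-to-$p$ part'': the subgroup $\Pi^o$ will be chosen as a wildly ramified $\ZZ/p\ZZ$-cover, used only to boost the genus and to create the index-$p$ subgroup, while the kernel $H$, being prime-to-$p$, will be realized by a tame cover once there is enough room.

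Recall that the index-$p$ normal subgroups of $\Pi$ are exactly the groups $\pi_1(D)$ attached to connected étale $\ZZ/p\ZZ$-covers $D\to C$, that is, Artin--Schreier covers that are étale over $C$ but may be wildly ramified over the punctures $X\smallsetminus C$; choosing $\Pi^o$ amounts to choosing such a $D$, equivalently a class in the infinite-dimensional group $H^1(C,\ZZ/p\ZZ)=\cO(C)/(F-1)\cO(C)$. First I would arrange (1) and (2). Condition (1), $\mu(\pi_1(D))=G$, fails precisely when $\pi_1(D)=\mu^{-1}(G_0)$ for some index-$p$ normal subgroup $G_0\lhd G$, and there are only finitely many such classes. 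Condition (2) is arranged by the Deuring--Shafarevich / Riemann--Hurwitz formula: prescribing at a puncture a local equation $y^p-y=f$ with a pole of large order forces the genus $g_D$ of the completion of $D$ to be large, and there is an infinite supply of such classes. Hence one can pick $D$ with $g_D\ge g$ avoiding the finitely many bad classes, securing (1) and (2) simultaneously; this is the step carried out by the explicit constructions of \cite{Kumar2008}.

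It remains to establish (3), proper solvability of the restricted split problem over $\Pi^o=\pi_1(D)$. Let $E_D\to D$ be the connected $G$-Galois étale cover determined by $\mu|_{\Pi^o}$ (connected by (1)), and set $N:=\ker(\mu|_{\Pi^o})=\pi_1(E_D)$. A short computation shows that a proper solution $\phi\colon\Pi^o\to\Gamma=H\rtimes G$ is the same datum as a $G$-equivariant continuous surjection $\pi_1(E_D)\onto H$, where $G=\Pi^o/N$ acts on $H$ through the splitting. Since $H$ is prime-to-$p$, such a surjection factors through the prime-to-$p$ quotient $\pi_1(E_D)^{(p')}$, which by Grothendieck's description is free prime-to-$p$ of rank $2g_{E_D}+r_{E_D}-1$. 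Because $g_D\ge g$ was taken large, Riemann--Hurwitz forces $g_{E_D}$, hence this rank, to be large, and the $G$-action on the free part then leaves enough independent tame generators to build the required $G$-equivariant surjection onto the prime-to-$p$ group $H$. This yields the proper solution and proves (3).

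The hard part will be the simultaneity of the choice of $D$: one must exhibit a single étale $\ZZ/p\ZZ$-cover that is wildly enough ramified at the punctures to force $g_D\ge g$, yet whose Artin--Schreier class avoids the finitely many classes that would collapse $\mu(\pi_1(D))$ below $G$, and that still leaves enough room in the intermediate cover $E_D$ to realize $H$ equivariantly. The difficulty is quantitative rather than qualitative: keeping the index of $\Pi^o$ bounded by $p$ while forcing large genus is exactly what the ``adding branch points'' arguments of \cite{HarbaterStevenson2011} and the quotient criteria of \cite{Jarden2011} do not provide, and is the reason for relying on the hands-on constructions of \cite{Kumar2008}.
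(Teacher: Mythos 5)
Your reductions for conditions (1) and (2) are sound and essentially correct: the index-$p$ normal subgroups $\Pi^o$ with $\mu(\Pi^o)\neq G$ are exactly the finitely many $\mu^{-1}(G_0)$ with $G_0\lhd G$ of index $p$, and Artin--Schreier classes with a pole of large prime-to-$p$ order at a puncture give arbitrarily large genus, so a simultaneous choice of $D$ exists. The gap is concentrated in your step (3), in two places. First, the claim that a proper solution $\phi\colon\Pi^o\to H\rtimes G$ ``is the same datum as'' a $G$-equivariant surjection $N=\pi_1(E_D)\onto H$ is false. From a solution $\phi$ one does get $\beta=\phi|_N\colon N\to H$, surjective exactly when $\phi$ is proper; but $\beta$ is equivariant only up to the inner twist by $\phi$ itself, and conversely an equivariant surjection $\beta$ need not extend to any homomorphism lifting $\mu|_{\Pi^o}$: extending the associated $1$-cocycle from $N$ to $\Pi^o$ is obstructed. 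A toy counterexample: $\Pi^o=\ZZ_\ell$, $N=\ell\ZZ_\ell$, $G=\ZZ/\ell\ZZ$, $\Gamma=\ZZ/\ell\ZZ\times\ZZ/\ell\ZZ$ with trivial action; then $\beta(\ell x)=x\bmod \ell$ is an equivariant surjection, yet every lift $\phi$ of $\mu$ kills $N$, and indeed no proper solution exists since $\ZZ_\ell$ is procyclic. Second, and more seriously, your production of the equivariant surjection from a rank count does not work: Grothendieck's theory identifies the abstract isomorphism type of $\pi_1(E_D)^{(p')}$ as a free prime-to-$p$ group of computable rank, but it gives no control whatsoever over the outer $G$-action coming from the geometry of $E_D\to D$. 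There is no supply of ``independent tame generators'' permuted or stabilized by $G$ from which to build a $G$-equivariant epimorphism onto $H$, let alone one inducing the split extension class; producing such a thing is precisely the hard content of the proposition, and your proposal asserts it rather than proves it.

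The paper's proof is structured to avoid ever facing this equivariance problem, and this is where it genuinely diverges from your plan: it does not fix the $\ZZ/p\ZZ$-cover first and then solve over it, but constructs the index-$p$ cover and the solution simultaneously by formal patching over $k[[t]]$. The curve $T$ is built from the relation $t=xy$ so that its generic fibre is a $\ZZ/p\ZZ$-cover of $X_{k((t))}$ dominating a high-genus Artin--Schreier curve $Y$ with $g_Y\geq\max\{|H|,g\}$; the prime-to-$p$ group $H$ is realized as an \emph{\'etale} cover $W_Y\to Y$ by \cite{SGA1} (here no equivariance is needed, since the $H$-cover lives over $Y$, away from the $G$-cover $W_X$); and \cite[Propositions 6.4 and 6.9]{Kumar2008} patch $W_Y$ with the pullback of $W_X$ into a single $\Gamma$-cover $W\to T$ and then specialize $t$ to obtain $W_s\to T_s\to X$ with (1)--(3) at once --- in particular condition (1), the linear disjointness of $k(T_s)$ and $k(W_X)$, is an output of the patching rather than of a counting argument. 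Your step (3) could alternatively be repaired by citing the theorem of Pop and Harbater that every finite split embedding problem for the fundamental group of a smooth affine curve in characteristic $p$ has a proper solution, applied to the affine curve $D^o$ after your choice of $D$; but as written, the equivariant-generator argument is a genuine gap, not a sketch of a correct proof.
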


Let $K^{un}$ denote the compositum (in some fixed algebraic closure of $k(C)$) of the function fields of all finite \'etale covers of $C$. Let $X$ be the smooth completion of $C$. 
By a strong version of Noether normalization theorem (\cite[Corollary 16.18]{Eis}), there exists a finite surjective $k$-morphism $\Phi_0: C\to \Aff^1 _x$ which is generically separable. Here  $x$ denotes the local coordinate of the affine line. The branch locus of $\Phi_0$ is of codimension $1$, hence $\Phi_0$ is \'etale away from finitely many points. By translation we may assume none of these points map to $x=0$. $\Phi_0$ extends to a finite surjective morphism $\Phi_X:X\rightarrow \PP^1 _x$.
Let $\{r_1,\ldots,r_N\}=\Phi_X^{-1}(\{x=0\})$, then $\Phi_X$
is \'etale at $r_1,\ldots,r_N$.

We use the construction in \cite[Section 6]{Kumar2008}, which we recall for the reader's convenience.  Let $\Phi_Y:Y\to \PP^1_y$ be a $p$-cyclic cover between smooth curves, locally given by $Z^p-Z-y^{-r}=0$, for some $r$ that is prime to $p$. The genus of $Y$ is 
\begin{equation}\label{eq:gY}
g_Y = (p-1)(r-1)/2 
\end{equation}
and $\Phi_Y$ is totally ramified at $y=0$ and \'etale elsewhere (see \cite{Pries2006}). Let $F$ be the zero locus of $t-xy$ in $\PP^1_x\times_{\spec(k)}\PP^1_y\times_{\spec(k)}\spec(k[[t]])$. Let $X_F$ and $Y_F$ be the normalization of an irreducible dominating component of the product $X\times_{\PP^1_x} F$ and $Y\times_{\PP^1_y} F$, respectively. Let $T$ be the normalization of an irreducible dominating component of $X_F\times_F Y_F$. We summarize the situation in Figure~\ref{defnofT}.

\begin{figure} [htbp]\renewcommand{\thefigure}{\arabic{section}.\arabic{figure}}\label{defnofT}
\begin{equation*}
\xymatrix {
        &         & T\ar@<1ex>[dl] \ar@<1ex>[dr]\\
 &X_F\ar@<1ex>[dl]\ar@<1ex>[dr] &  &Y_F\ar@<1ex>[dl]\ar@<1ex>[dr]\\
X\ar@<1ex>[dr]&       &F\ar@<1ex>[dl]\ar@<1ex>[dr]&      &Y\ar@<1ex>[dl] \\
       &\PP^1 _x &             &\PP^1 _y \\
}
\end{equation*}
\caption{}
\end{figure}

\begin{lemma}\label{lm:p-cyclic}
 The fiber of the morphism $T\to X_F$ over $\spec(k((t))$ induces a $\ZZ/p\ZZ$-cover $T\times_{\spec(k[[t]])} \spec(k((t)))\to X\times_{\spec(k)}\spec(k((t)))$ which is \'etale over $C$.
\end{lemma}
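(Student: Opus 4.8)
The plan is to pass to the generic fibre, identify the morphism explicitly as the pullback of an Artin--Schreier cover of $\PP^1_x$, and then establish the two assertions separately: étaleness over $C$ and connectedness, the latter being what makes the cover a genuine $\ZZ/p\ZZ$-cover.

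First I would analyse $F$ over $k((t))$. Since $t$ is invertible in $k((t))$, the conic $\{t-xy=0\}$ has smooth generic fibre isomorphic to $\PP^1$, and both projections $F\to\PP^1_x$ and $F\to\PP^1_y$ restrict to isomorphisms on this generic fibre, the second being $x\mapsto y=t/x$. Consequently $(X_F)_{k((t))}\cong X_{k((t))}$ and $(Y_F)_{k((t))}\cong Y_{k((t))}$, and under these identifications the morphism of the lemma becomes the base change along $\Phi_X\colon X\to\PP^1_x$ of the $p$-cyclic cover $(Y_F)_{k((t))}\to F_{k((t))}\cong\PP^1_x$. Substituting $y=t/x$ into $Z^p-Z=y^{-r}$ shows that this latter cover is given by $Z^p-Z=t^{-r}x^r$; it is totally ramified at $x=\infty$ (the image of $y=0$) and étale over $\Aff^1_x$ (at $x=0$ the right-hand side vanishes and the cover splits). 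Since $T$ is irreducible and dominates $\spec k[[t]]$, its generic fibre $T_{k((t))}$ is irreducible, and it is the normalization of the (unique dominating) irreducible component of this pullback.

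For étaleness I would use that the pullback of an étale cover remains étale, so $T_{k((t))}\to X_{k((t))}$ is étale over $\Phi_X^{-1}(\Aff^1_x)$ and its entire branch locus lies in $\Phi_X^{-1}(\infty)$. As $\Phi_0=\Phi_X|_C\colon C\to\Aff^1_x$ lands in the affine line, $C$ is disjoint from $\Phi_X^{-1}(\infty)$, which gives étaleness over $C$.

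The main obstacle is connectedness: a priori the pullback of the $\ZZ/p\ZZ$-cover could split into $p$ trivial copies, in which case $T_{k((t))}$ would map isomorphically to $X_{k((t))}$ rather than being a degree-$p$ cover. I would rule this out by showing the pullback is ramified at a point $P$ of $X$ lying over $x=\infty$, since a $\ZZ/p\ZZ$-cover of the connected curve $X_{k((t))}$ that is ramified somewhere cannot be a disjoint union of trivial sheets and is therefore connected. Working in the complete local field $k((t))((s))$ at such a $P$, where $s$ is a local coordinate and $x=s^{-e_P}\cdot(\text{unit})$ with $e_P$ the ramification index of $\Phi_X$, the cover is $Z^p-Z=f$ with $f=t^{-r}s^{-re_P}\cdot(\text{unit})$. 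If $f$ were of the form $w^p-w$, comparing leading terms in $s$ would force $w=b\,s^{-re_P/p}+\cdots$ with $b^p=t^{-r}\cdot(\text{unit in }k^\times)$; but then $v_t(b^p)\equiv 0\pmod p$ while $v_t(t^{-r})=-r\not\equiv 0\pmod p$ because $\gcd(r,p)=1$, a contradiction. Thus the leading coefficient $t^{-r}$ is not a $p$-th power in $k((t))$, so the Artin--Schreier conductor at $P$ cannot be reduced and the cover stays ramified regardless of whether $p\mid e_P$. This is precisely where the prime-to-$p$ choice of $r$ enters. Connectedness follows, and hence $T_{k((t))}\to X_{k((t))}$ is a genuine $\ZZ/p\ZZ$-cover, étale over $C$, as claimed.
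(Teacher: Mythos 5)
Your proof is correct and follows essentially the same route as the paper: both pass to the generic fibre over $k((t))$, identify the cover with the Artin--Schreier equation $Z^p-Z=(x/t)^r$ pulled back along $\Phi_X$, and deduce \'etaleness over $C$ from the fact that the branch locus lies over $x=\infty$, which $C$ avoids. The one point where you go beyond the paper is the irreducibility of $Z^p-Z-(x/t)^r$ over $k((t))k(X)$, which the paper simply asserts and which your leading-coefficient computation in $k((t))((s))$ (using that $v_t(t^{-r})=-r$ is prime to $p$, so the leading coefficient cannot be a $p$-th power in the residue field) establishes correctly, including in the delicate case $p\mid e_P$ where the residue extension is inseparable rather than the ramification tame.
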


\begin{proof}
 Note that $k(X_F)=k((t))k(X)$ because generically $t\ne 0$ and over $t\ne 0$ the morphism $F\to \PP^1_x$ is  the base change $\PP^1_x\times_{\spec(k)}\spec(k((t)))\to \PP^1_x$. Also note that over $t \ne 0$ the local coordinates $x$ and $y$ satify the relation $y=t/x$ on $T$. Therefore, since $Y_F$ is the base change of $Y\to \PP^1_y$, it follows that  $k(Y_F)=k((t))(x)[Z]/(Z^p-Z-(x/t)^r)$. 
 Since $T$ is a dominating component of $X_F\times_F Y_F$, we have 
 \[
  k(T) = k(X_F)k(Y_F) = k((t))k(X)(k(x)[Z]/(Z^p-Z-(x/t)^r) ).
 \]
 But $Z^p-Z-(x/t)^r$ is irreducible over $k(X)k((t))$, hence $k(T)/k(X_F)$ is a Galois extension with Galois group  $\ZZ/p\ZZ$. Finally since the cover $T\times_{\spec(k[[t]])} \spec(k((t)))\to X\times_{\spec(k)}\spec(k((t)))$ is given by the equation $Z^p-Z-(x/t)^r=0$, it is \'etale away from $x=\infty$. In particular, it is \'etale over $C$.
\end{proof}

Since $\Gal(K^{un}/k(C))=\pi_1(C)$, the surjection $\mu\colon\pi_1(C)\to G$ induces an irrdeucible normal $G$-cover $\Psi_X:W_X\to X$ which is \'etale over $C$. By \eqref{eq:gY}, we can choose a prime-to-$p$  integer $r$ to be sufficiently large so that $g_Y \geq \max\{|H|,g\}$. Then there exist an irreducible smooth \'etale $H$-cover $\Psi_Y:W_Y\to Y$, since $H$ is a prime-to-$p$ group (\cite[XIII,Corollary 2.12]{SGA1}).

\begin{proposition}\label{many-covers}
There exist an irreducible normal $\Gamma$-cover $W\to T$ of $k[[t]]$-curves, a regular finite type $k[t,t^{-1}]$-algebra $B$ contained in $k((t))$ and an open subset $S$ of $\spec(B)$ such that the $\Gamma$-cover $W\to T$ descends to a $\Gamma$-cover of connected normal projective $\spec(B)$-schemes $W_B\to T_B$ and for any closed point $s$ in $S$ we have the following:
\begin{enumerate}
 \item The fiber $T_s$ of $T_B$ at $s$ is a smooth irreducible $\ZZ/p\ZZ$-cover of $X$ \'etale over $C$.
 \item The fiber $W_s\to T_s$ of $W_B\to T_B$ at $s$ is a smooth irreducible $\Gamma$-cover such that the composition $W_s\to T_s \to X$ is \'etale over $C$.
 \item The quotient $W_s/H$ is isomorphic to an irreducible dominating component of $W_X\times_X T_s$ as $G$-covers of $T_s$. In particular, $k(W_X)$ and $k(T_s)$ are linearly disjoint over $k(X)$. 
\end{enumerate}
\end{proposition}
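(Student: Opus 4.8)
The plan is to first construct a single good $\Gamma$-cover $W\to T$ over the generic point $\spec k((t))$ of $\spec k[[t]]$, verifying there all the properties in (1)--(3), and then to descend the whole configuration to a finite-type base $B$ and spread out, so that the good behaviour of the generic fibre persists on a dense open $S\subseteq\spec B$. Thus there are two tasks of very different flavour: a geometric construction of one good cover, and a routine spreading-out argument.

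For the construction I would exploit that $\mathcal{E}$ is split, so $\Gamma=H\rtimes G$, and that $T$ dominates both $X$ (through $X_F$) and $Y$ (through $Y_F$), its generic fibre $T\times_{\spec k[[t]]}\spec k((t))$ being the $\ZZ/p\ZZ$-cover of $X_{k((t))}$ produced in Lemma~\ref{lm:p-cyclic}. Pulling the $G$-cover $\Psi_X\colon W_X\to X$ back along $T\to X$ yields a $G$-cover $V\to T$; an irreducible dominating component of $W_X\times_X T$ will play the role of $W_s/H$ in (3), and its irreducibility is exactly the assertion that $k(W_X)$ and $k(T)$ are linearly disjoint over $k(X)$. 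Pulling the prime-to-$p$ cover $\Psi_Y\colon W_Y\to Y$ back along $T\to Y$ yields an $H$-cover of $T$, and the heart of the matter (following \cite[Section 6]{Kumar2008}) is to mount this $H$-cover on top of $V$ in a $G$-equivariant fashion, using the semidirect action of $G$ on $H$ together with the correspondence $t=xy$ tying the $X$- and $Y$-coordinates, so that the total cover $W\to T$ is Galois with group $\Gamma$ rather than with the direct product $G\times H$. I define $W$ as the normalization of a dominating irreducible component, so that $W\to T$ is an irreducible normal $\Gamma$-cover with $W/H=V$. The ramification of both auxiliary pieces is concentrated over $x=\infty$, for the $\ZZ/p\ZZ$-part by the equation $Z^p-Z-(x/t)^r$, and for the $W_Y$-part over $y=0\leftrightarrow x=\infty$ since $\Phi_Y$ is totally ramified only at $y=0$ and $\Psi_Y$ is étale; hence everything is étale away from $C$, giving the étale-over-$C$ assertions in (1) and (2) on the generic fibre. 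The genus requirement of Proposition~\ref{pro:prime-to-p} is met because $r$ was chosen with $g_Y\ge\max\{|H|,g\}$ and, by \eqref{eq:gY}, the genus of the $\ZZ/p\ZZ$-cover grows with $g_Y$.

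For the descent, the cover $W\to T$ together with its $\Gamma$-action and the subcover $V\to T$ is cut out by finitely many equations with coefficients in $k((t))$, all of which lie in some regular finite-type $k[t,t^{-1}]$-subalgebra $B\subset k((t))$; this produces models $W_B\to T_B$ over $\spec B$ carrying a fibrewise $\Gamma$-action, and after enlarging $B$ we may assume these schemes are connected, normal and projective. By standard spreading-out (EGA~IV), the loci in $\spec B$ over which the fibres of $T_B$ and $W_B$ are smooth and geometrically irreducible, over which the morphisms are étale over $C$, and over which the full $\Gamma$-action (resp.\ $\ZZ/p\ZZ$-action) is present, are open; each is nonempty because it contains the generic point, where the properties were checked in the construction. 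Their intersection is the required $S$, and conditions (1)--(3) at a closed point $s\in S$ are just the specializations of the corresponding generic statements; in particular the linear disjointness of $k(W_X)$ and $k(T_s)$ over $k(X)$ specializes from that of $k(W_X)$ and $k(T)$.

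The main obstacle is the construction in the second paragraph: realizing the \emph{semidirect} product $\Gamma=H\rtimes G$, and not merely $G\times H$, as the Galois group of an \emph{irreducible} cover $W\to T$. This demands, on one side, the $G$-equivariant twisting of the pulled-back $H$-cover along $V$, and on the other two independent irreducibility inputs: linear disjointness of $k(W_X)$ and $k(T)$ over $k(X)$ so that $V\to T$ has full group $G$, and genuineness of the $H$-layer so that the $H$-cover of $V$ does not partially split. Once a single good cover exists over $k((t))$, extracting $S$ is a routine application of the constructibility and openness of these loci.
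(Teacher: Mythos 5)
Your two--stage skeleton (build one good $\Gamma$-cover, then descend to a finite type $k[t,t^{-1}]$-algebra $B\subset k((t))$ and spread out) is indeed the paper's skeleton: the paper obtains $W\to T$ from \cite[Proposition 6.4]{Kumar2008} and specializes via the ``Lefschetz type principle'' \cite[Proposition 6.9]{Kumar2008}. But there is a genuine gap at exactly the point you yourself flag as ``the main obstacle'', and your proposed mechanism for it would fail. You suggest pulling back $W_X$ and $W_Y$ to $T$ and defining $W$ as ``the normalization of a dominating irreducible component'' of the mounted covers. Any irreducible component of a fibre product of a $G$-cover and an $H$-cover of $T$ has function field the compositum of the two function fields, so its Galois group over $k(T)$ embeds in $G\times H$; and an extension of $G$ by $H$ sitting inside $G\times H$ (containing $1\times H$) forces the trivial outer action, since $G\times 1$ centralizes $1\times H$. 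So no component-of-a-product construction can realize $\Gamma=H\rtimes G$ with nontrivial action, and in fact the twisting cannot be performed over the generic fibre $\spec k((t))$ by pullbacks at all. In \cite[Section 6]{Kumar2008} it is done by \emph{formal patching over $k[[t]]$}: the special fibre of $T$ has the two patches $T_X=T\setminus\{x=0\}$ and $T_Y=T\setminus\{y=0\}$ glued through $t=xy$, the $H$-data coming from $W_Y$ is induced up via $\Ind_G^\Gamma$ on the $Y$-side while $\Ind_G^\Gamma(W_{XT})$-type data sits on the $X$-side, and Grothendieck's existence theorem assembles the irreducible $\Gamma$-cover. This is why $W\to T$ in the proposition is a cover of $k[[t]]$-curves and why the paper's \'etaleness verification runs through the $(t)$-adic completions and the induced-cover descriptions (Conclusions (1), (1$'$), (2) of \cite[Proposition 6.4]{Kumar2008}) rather than through generic-fibre equations as in your sketch.

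A secondary gap sits in your spreading-out paragraph: you assert that smoothness and geometric irreducibility of fibres are open conditions whose loci contain the generic point, but the generic fibre here is a normal projective curve over the \emph{imperfect} field $k((t))$, and such a curve need not be smooth --- the suspect points are precisely the wildly ramified ones over $x=\infty$ coming from $Z^p-Z-(x/t)^r$ --- nor is geometric irreducibility automatic. The paper delegates exactly these verifications to \cite[Proposition 6.9]{Kumar2008}, which directly produces an open $S\subset\spec(B)$ whose closed fibres are smooth irreducible curves over $k$ with the prescribed Galois groups and branch loci; if you replace that citation by raw EGA~IV spreading out, you owe a proof of these generic-fibre properties. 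Your remaining bookkeeping is consistent with the paper: the \'etale-over-$C$ assertions, and the derivation of (3) by identifying $W_s/H$ with a dominating component of $W_X\times_X T_s$ (via $W/H\cong W_{XT}$, Conclusion (5) of \cite[Proposition 6.4]{Kumar2008}) and reading off the linear disjointness of $k(W_X)$ and $k(T_s)$ over $k(X)$, match the paper's argument.
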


\begin{proof} 
 By \cite[Proposition 6.4]{Kumar2008} there exists a normal irreducible $\Gamma$-cover $W\to T$ of $k[[t]]$-curves with prescribed properties. We claim that this cover is \'etale away from the points lying above $x=\infty$ in $T$.  
 
 Indeed, by  Conclusion $(1)$ of \cite[Proposition 6.4]{Kumar2008}, we have 
 \begin{equation}\label{eqqq}
   W\times_T \widetilde{T_X} = {\rm Ind}_G^\Gamma (\widetilde{W_{XT} \times_T T_X}),
 \end{equation}
where tilde denotes the $(t)$-adic completion, $T_X = T\setminus \{x=0\}$, $W_{XT}$ is the normalization of an irreducible dominating component of $W_X\times_X T$, and for any variety $V$, ${\rm Ind}_G^\Gamma V= V^{\Gamma}/\sim$, where $(\gamma,v)\sim (\gamma',v')$ if and only if $\gamma'=\gamma g^{-1}$ and $v'= gv$, for some $g\in G$. Note that the left hand side of \eqref{eqqq} is the $(t)$-adic completion of $W\times_T T_X$  hence the branch locus of $W\times_T T_X \to T_X$ and $W\times_T \tilde T_X\to \tilde T_X$ are same. On the other hand, the right hand side of \eqref{eqqq} is the disjoint union of copies of $\widetilde{W_{XT} \times_T T_X}$, so the branch locus of $W\times_T \tilde T_X \to \tilde T_X$ maps to the branch locus of $W_X\to X$ under the morphism $T_X\to X$. Thus, since $W_X\to X$ is \`etale away from $\{x=\infty\}$, so is $W\times_T T_X \to T_X$. Similarly, using (1') of \cite[Proposition 6.4]{Kumar2008} and the fact that $W_Y\to Y$ is \'etale everywhere, we get that over $T_Y=T\setminus \{y=0\}$ the cover $W\to T$ is \'etale. It follows from Conclusion (2) of \cite[Proposition 6.4]{Kumar2008} that $W\to T$ is \'etale over $\{x=y=0\}$, as needed.

By Conclusion (5) of \cite[Proposition 6.4]{Kumar2008} $W/H \cong W_{XT}$ as $G$-covers of $T$, hence we identify them. By Lemma~\ref{lm:p-cyclic}, away from $t=0$, $T\to X\times_k \spec(k[[t]])$ is a $\ZZ/p\ZZ$-cover.

Now applying ``Lefschetz's type principle'', \cite[Proposition 6.9]{Kumar2008}, to the proper surjective morphisms of projective $k[[t]]$-curves
 $$
 W\to W_{XT} \to T \to X\times_k\spec(k[[t]])
 $$ 
and the points $r_1,\ldots,r_N$ in $X$ lying over $x=\infty$
 it can be seen that there exists an open subset $S$ of the spectrum of a $k[t, t^{-1}]$-algebra $B$ such that the above morphisms descend to the morphisms of $\spec(B)$-schemes 
 $$
 W_B\to W_{XT,B} \to T_B \to X\times_k\spec(B)
 $$ 
 and the fiber over every point $s$ in $S$ leads to covers of smooth irreducible curves $W_s\to T_s$ and $T_s\to X$ with the desired ramification properties and Galois groups. This proves Conclusions (1) and (2). For (3), we note that $W_s\to T_s$ factors through $W_{XT,s}$ and since $W/H$ is isomorphic to $W_{XT}$, $W_s/H$ is isomorphic to $W_{XT,s}$. So $W_{XT,s}$ is an irreducible $G$-cover of $T_s$. Also $W_{XT,s}$ is an irreducible dominating component of $W_X\times_X T_s$. Finally, the compositum $k(W_X)k(T_s)$ is the function field of $W_{XT,s}$ and it is a Galois extension of $k(T_s)$ with Galois group $G$. Also $\Gal(k(W_X)/k(X))=G$, so $k(T_s)$ and $k(W_X)$ are linearly disjoint over $k(X)$.
 
 Note that the statement of \cite[Proposition 6.9]{Kumar2008} only asserts the existence of the fibers $W_s$, $T_s$, etc.\ and covering morphisms between them with the desired Galois group and ramification properties. Since the morphisms of $k[[t]]$-curves $W\to T$ and $T\to X_F$ are finite, they descend to morphisms of a regular finite type $k[t,t^{-1}]$-algebra $B$ contained in $k((t))$. In the proof of \cite[Proposition 6.9]{Kumar2008} it is further shown that there exist an open subset of $\spec(B)$ such that fibers over any point in this open subset have the desired properties.  
\end{proof}

\begin{proof}[Proof of Proposition \ref{pro:prime-to-p}]
 Using the notation of the above proposition, let $\Sigma \subset T_s$ be the preimage of $X\setminus C$ under the covering $T_s\to X$, let $T_s^o=T_s\setminus \Sigma$, and let $\Pi^o=\pi_1(T_s^o)$. By $(1)$ of Proposition \ref{many-covers}, 
 $k(T_s^o)/k(C)$ is a Galois extension with Galois group $\ZZ/p\ZZ$. If $Z\to T_s^o$ is an \'etale cover of $T_s^o$ then the composition with $T_s^o\to C$ gives an \'etale cover $Z\to C$. Conversely, if $Z\to C$ is an \'etale cover of $C$ which factors through $T_s^o\to C$, then $Z\to T_s^o$ is also \'etale. So $\Pi^o=\Gal(K^{un}/k(T_s))=\pi_1(T_s^o)$ is an index $p$ normal subgroup of $\Gal(K^{un}/k(C))=\pi_1(C)$.

 For (2), note that by construction of $T$ (see Figure~\ref{defnofT}), $T$ dominates $Y_F$, so over the generic point, the genus of the $k((t))$-curve $T\times_{\spec(k[[t]]}\spec(k((t))$ is at least the genus of $Y$. But $g_Y\ge \g$, so this genus is at least $g$. Since $T_s$ is a smooth fiber of $T_B$ which in turn descends from $T$, the genus of $T_s$ is also at least $\g$.    

 Since the $G$-cover $W_X\to X$ is induced by the epimorphism $\mu\colon \Pi\to G$, we have $\ker\mu=\Gal(K^{un}/k(W_X))$. So $(\ker\mu)\cap \Pi^o=\Gal(K^{un}/k(W_X)k(T_s))$. Hence 
 $$
 \mu(\Pi^o)=\Pi^o/((\ker\mu)\cap\Pi^o)=\Gal(k(W_X)k(T_s)/k(T_s)).
 $$
  But $\Gal(k(W_X)k(T_s)/k(T_s))=G$ by $(3)$ of Proposition \ref{many-covers}, so we get $\mu(\Pi^o) = G$, as needed.

 Also from Proposition \ref{many-covers}, we get a $\Gamma$-cover $W_s$ of $T_s$ such that $k(W_X)\subset k(W_s)$ and we obtain the following tower of field extensions:
$$
\xymatrix{
  & K^{un}\ar@{-}[d]\ar@/_2pc/@{-}[dddl]_{\Pi^o} \\
  & k(W_s)\ar@{-}[d]^H\ar@/_/@{-}[ddl]_{\Gamma}\\
  & k(T_s)k(W_X)\ar@{-}[dl]^G\ar@{-}[dr]\\
  k(T_s)\ar@{-}[dr]_{\ZZ/p\ZZ}& &k(W_X)\ar@{-}[dl]^{G}\\
  & k(X) }
$$ 

Hence there is a surjection from $\Pi^o \onto
\Gamma=\Gal(k(W_s)/k(T_s))$ which dominates $\mu|_{\Pi^o}\colon\Pi^o
\onto G$. Hence $W_s$ provides a solution to the embedding problem restricted to $\Pi^o$.
\end{proof}

\subsection{Proof of Theorem~\ref{thm:pi_1}}
\begin{trivlist}
 \item Let $k$ be an algebraically closed field of characteristic $p > 0$, 
 let $C$ be a smooth affine $k$-curve, $\Pi = \pi_1(C)$, $g$ a positive integer, and $\mathcal{E}=(\mu\colon \Pi \to G, \alpha \colon \Gamma\to G)$ a finite embedding problem for $\Pi$. Put $H=\ker\alpha$.
 
 Since $\Pi$ is projective (\cite[Proposition 1]{Serre1990}), $\mathcal{E}$ has a weak solution. It is well known that $\mathcal{E}$  can be  dominated  by a finite split embedding problem $\mathcal{E}'$, in the sense that a solution of $\mathcal{E}'$ induces a solution of $\mathcal{E}$ (even when restricted to a subgroup $\Pi^o$ with $\mu(\Pi^o)=G$), see for example \cite[Discussion after Lemma~2.3]{HarbaterStevenson2005}. Hence, without loss of generality, we can assume that $\mathcal{E}$ splits. 
 
 Let $p(H)$ be the subgroup of $H$ generated by all $p$-sylow subgroups. Since $p(H)$ is characteristic in $H$, it is normal in $\Gamma$. Then there is $\alpha'\colon \Gamma/p(H)\to G$ such that $\alpha = \alpha'\circ \alpha''$, where $\alpha''\colon \Gamma\to \Gamma/p(H)$ is the natural quotient map. 
 
 Since $\ker \alpha'$ is prime-to-$p$, by Proposition \ref{pro:prime-to-p}, there exists an index $p$ normal subgroup $\Pi^o$ of $\Pi$ such that $\mu(\Pi^{o})=G$, $\Pi^{o}$ corresponds to a curve of genus at least $g$, and the embedding problem  
 \[
  (\mu|_{\Pi^o}: \Pi^o \to G, \alpha'\colon \Gamma/p(H)\to G)
 \]
has a solution, say $\mu'\colon \Pi^{o}\to \Gamma/p(H)$. 

The kernel of $\alpha''$ is $p(H)$ which by definition is quasi-$p$. So using Pop's result (\cite[Theorem B]{pop}) the embedding problem 
\[
(\mu': \Pi^o \to \Gamma/p(H), \alpha''\colon  \Gamma \to\Gamma/p(H)) 
\]
 has a solution, say $\phi$. Then 
 \[
  \alpha\circ \phi = \alpha'\circ \alpha''\circ \phi=\alpha'\circ \mu'=\mu|_{\Pi^o}.
 \]
Hence $\phi$ is also a solution of $ (\mu|_{\Pi^o}: \Pi^o \to G, \alpha\colon \Gamma\to G)$, as needed.\qed
\end{trivlist}

\section{Proof of Theorem~\ref{thm:main}}
 Let $k$ be a countable algebraically closed field of characteristic $p>0$, let $C$ be an affine $k$-curve, let $\Pi=\pi_1(C)$, let $g$ be a non-negative integer, and let $M\leq P_g(C)$ be a subgroup of $\Pi$. In particular $[\Pi:M]=\infty$. Assume there exist normal subgroups $M_1,M_2$ of $\Pi$ such that $M_1\cap M_2\leq M$ and $M_1,M_2\not\leq M$. In Part~\eqref{part:main_a} we have to prove that for every finite simple group $S$, the direct power $S^\infty$ is a quotient of $M$. In Part~\eqref{part:main_b} we further assume that $[MM_i:M]\neq p$, and we have to prove that $M$ is free profinite group of countable rank.  
 
 \subsection*{First reduction} We can assume that $[MM_1:M] = \infty$ and $M\neq MM_2$. For Part~\eqref{part:main_b} we can further assume that $[MM_2:M]\neq p$.
 
 Indeed, if $[M_1:M_1\cap M]= [MM_1:M]<\infty$, then there exists an open normal subgroup $U$ of $\Pi$ such that $U\cap M_1\leq M\cap M_1 \leq M$. Since $[\Pi:M]=\infty$, we have $[U:U\cap M]=[UM:M]=\infty$. Thus we can replace $M_1$ by $U$ and $M_2$ by $M_1$. 
 
 \subsection*{Second reduction}  For Part~\eqref{part:main_b} it suffices to solve an arbitrary finite split embedding problem
\[
\mathcal{E}_1 = (\mu \colon M \to G_1, \alpha_1\colon A\rtimes G_1 \to G_1)
\]
for $M$ and for Part~\eqref{part:main_a} it suffices to solve $\mathcal{E}_1$ when $G_1=1$ and $A=S^n$, for arbitrary $n\geq 1$. 

Indeed, since $\Pi$ is projective \cite[Proposition 1]{Serre1990}, $M$ is also projective \cite[Proposition 22.4.7]{FriedJarden2008}. Since $k$ is countable, $\Pi$ is of rank $\aleph_0$, hence ${\rm rank}(M)\leq \aleph_0$. Thus to prove that $M$ is free of countable rank it suffices to properly solve any finite split embedding problem for $M$ \cite[Theorem 2.1]{HarbaterStevenson2005}. 
In order to prove that $S^\infty$ is a quotient of $M$, it suffices to prove that $S^n$ is a quotient for every positive integer $n$. The latter is equivalent to properly solve any $\mathcal{E}_1$ with $G_1=1$ and $A=S^n$.
 
\subsection*{Solving $\mathcal{E}_1$ under the assumption $\boldsymbol{[MM_2:M]\neq p}$}
Let $L$ be an open normal subgroup of $\Pi$ such that $M\cap L \leq \ker\mu$ and the following conditions hold: 
\begin{enumerate}\renewcommand{\theenumi}{\Alph{enumi}}
\item \label{it:1}
 $[M_1 M L : M L ] = [M_1 : M_1\cap ( ML) ] \geq 3 p$ (recall $[M_1M:M]=\infty$).
\item \label{it:2}
 $[M_2ML:ML] = [ M_2 : M_2\cap ( ML)] \neq 1,p$ (recall $[M_2M:M]\neq 1,p$) .
\item \label{it:3}
 $[\Pi:ML] \geq 3p$ (recall $[\Pi:M]=\infty$).
\end{enumerate}

Let $G = \Pi/L$, $\phi\colon \Pi\to G$ the natural epimorphism, and $G_0 = \phi(M)= ML/L$. Since $M\cap L\leq \ker \mu$, the map $\mu$ factors as $\bar \mu\circ \phi|_M $, $\bar\mu \colon G_0\to G_1$. Hence $G_0$ acts on $A$ via $\bar\mu$, namely $a^g := a^{\bar\mu(g)}$, $a\in A$, $g\in G_0$. Let $I= \{ f\colon G\to A \mid f(\sigma\rho)=f(\sigma)^\rho, \ \forall \sigma\in G, \rho\in G_0\}$.  
Then $I\cong A^{(G:G_0)}$ and $G$ acts on $I$ by the formula 
\[
 f^\sigma(\tau) = f(\sigma \tau), \qquad \sigma,\tau\in G.
\]
We denote the corresponding semidirect product by $A\wr_{G_0} G := I\rtimes G$, and we refer to it as the twisted wreath product. It comes with a canonical projection map $\alpha\colon A\wr_{G_0} G \to G$. Thus 
\[
\mathcal{E} = (\phi\colon \Pi\to G, \alpha\colon A\wr_{G_0} G\to G)
\]
is a split embedding problem for $\Pi$. Theorem~\ref{thm:pi_1} gives an index $p$ open normal subgroup $U$ of $ \Pi$ containing $P_g(C)$, hence $M\leq U$, such that $\phi(U) = G$, hence $UL=\Pi$; and such that the restricted embedding problem 
\begin{equation}\label{eq:mathcalEU}
 \mathcal{E}|_U = (\phi|_U\colon U\to G, \alpha\colon A\wr_{G_0} G\to G)
\end{equation}
is solvable. Let $\psi\colon U \to A\wr_{G_0} G$ be a solution of $\mathcal{E}|_U$. 

\[
\xymatrix{
M_i\cap U \ar@{-}[r]^{\leq p}\ar@{-}[d]\ar@<-7ex>@/_10pt/@{--}[dd]_{A_i}							& M_i\ar@{-}[d]\\
M_i\cap (ML)\cap U\ar@{-}[r]\ar@{-}[d]					&M_i\cap (ML)\ar@{-}[dd]\\
(M_i\cap U) \cap (M  (L\cap U))\ar@{-}[d]\\
M_i\cap U \cap M \ar@{=}[r]								&M_i\cap M
}
\]

Let $A_i = [M_i\cap U :(M_i\cap U)\cap (M(L\cap U))]$, for $i=1,2$. Then $A_1 \geq \frac{[M_1:M_1\cap (ML)]}{p} \geq 3$. 
If $M_2\leq U$, then $M_2 \cap U = M_2$, and we have 
\[
A_2=[M_2 :M_2\cap (M(L\cap U))] \geq [M_2:M_2\cap ML] >1.
\]
If $M_2\not\leq U$, then $[M_2:M_2\cap U] = p$. Then we have
\begin{eqnarray*}
p\cdot [M_2\cap U:M_2\cap (ML)\cap U] &=&[M_2:M_2\cap U] [M_2\cap U:M_2\cap (ML)\cap U] \\
 &=&[M_2:M_2\cap (ML)\cap U] \\
 &=& [M_2:M_2\cap(ML)][M_2\cap(ML):M_2\cap(ML)\cap U]. 
\end{eqnarray*}
By \eqref{it:2} we get that the right hand side does not equal $p$, hence $[M_2\cap U:M_2\cap (ML)\cap U] > 1$, hence $A_2>1$. 
Since $UL = \Pi$, we have 
\[
[U:M (L\cap U)] \geq [U:U\cap ML] = [UML:ML] = [\Pi:ML]\geq 3p\geq 3.
\]

The last paragraph implies that if we set $\tilde{\Pi}=U$, $\tilde{M}_i=M_i\cap U$, $\tilde{M}=M\cap U = M$, $\tilde{L}=L\cap U$, and $\tilde{\mathcal{E}}=\mathcal{E}|_{U}$, then we have 
\begin{enumerate}\renewcommand{\theenumi}{\~{\Alph{enumi}}}
\item \label{it:1tilde}
 $[\tilde{M}_1 \tilde M \tilde L : \tilde M \tilde L ] = [\tilde M_1 : \tilde M_1\cap ( \tilde M \tilde L) ] \geq 3$.
\item \label{it:2tilde}
 $[\tilde M_2 \tilde M\tilde L:\tilde M\tilde L] = [ \tilde M_2 : \tilde M_2\cap ( \tilde M\tilde L)] >1$ .
 \item \label{it:3tilde}
 $[\tilde \Pi:\tilde M\tilde L] \geq 3$.
\end{enumerate}
and a proper solution $\psi$ of $\tilde{\mathcal{E}}$. 

Let $K_i = \phi(\tilde{M}_i)=\tilde{M}_i\tilde{L}/\tilde{L}$, $i=1,2$. Then \eqref{it:1tilde}-\eqref{it:3tilde} can be reformulated as
\begin{enumerate}\renewcommand{\theenumi}{\alph{enumi}}
\item $[K_1G_0:G_0]\geq 3$;
\item $[K_2 G_0:G_0]\geq 2$;
\item $[G:G_0]\geq 3$. 
\end{enumerate}

We apply \cite[Proposition~4.6]{Bary-SorokerHaranHarbater2010}. Let $D=\tilde{L}$, $\tilde{\Pi}_0 = \tilde{M}\tilde{L}$, and $\tilde{N}=\tilde{L}\cap \tilde{M_1}\cap \tilde{M_2}$. Let $A_0$ be a proper subgroup of $A$ that is normal in $A\rtimes G_0$. Then $G_0$ acts  on the nontrivial group $\bar{A}$. It suffices to prove by  \cite[Proposition~4.6]{Bary-SorokerHaranHarbater2010} that in this setting the embedding problem 
\[
(\bar \phi \colon \tilde{\Pi}/N\to G, \bar{\alpha} \colon \bar{A}\wr_{G_0} G\to G)
\]
is not solvable. Here $\bar\phi$ is the quotient map (recall that $G\cong \tilde{\Pi}/\tilde{L}$ and $N\leq \tilde{L}$). 

Put $H = \bar{A} \wr_{G_0} G$. 
Assume by negation that there is an epimorphism $\rho\colon \tilde{\Pi}\to H$ with $\bar\alpha \circ \rho = \phi$ and $\rho(N) = 1$. For $i=1,2$, let $H_i = \rho(\tilde{M}_i)$. Then $H_i$ is normal in $H$ and $\alpha(H_i) = \phi(\tilde{M_i}) = K_i$. By \cite[Lemma 13.7.4(a)]{FriedJarden2008} there is $h_1\in H_1$ and $h_2\in H_2$ such that $\alpha(h_1) = 1$ and $[h_1, h_2]\neq 1$. 

Let $\gamma_i \in \tilde{M}_i$ with $\rho(\gamma_i) = h_i$. Then $\phi(\gamma_1) = \alpha(h_1) = 1$, so $\gamma_1\in L$. Then 
\[
[\gamma_1,\gamma_2]\in [\tilde{L},\tilde{M}_2]\cap [\tilde{M}_1,\tilde{M}_2]\leq \tilde{L}\cap (\tilde{M}_1\cap \tilde{M}_2) = N.
\]
(Recall that if $A,B$ are normal subgroups of a group $C$, then $[A,B]\leq A\cap  B$, because $a^{-1}b^{-1} a b = a^{-1}  a^{b} = b^{-a} b$.)
We thus get that $[h_1,h_2] = [\rho(\gamma_1),\rho(\gamma_2)] \in \rho(N) = 1$. This contradiction implies there is no proper solution $\rho$ of $(\bar\phi, \bar\alpha)$, and hence by \cite[Proposition~4.6]{Bary-SorokerHaranHarbater2010} $\psi$ induces a proper solution $\nu$ of $\mathcal{E}_1$. This finishes the proof of Part~\eqref{part:main_b} by the reduction steps. 

\subsection*{Solving $\mathcal{E}_1$ with $G_1=1$ and $A=S^n$}

We separate the proof into two cases.
\subsubsection*{Case A} Assume $p\mid |S|$.  Then $S$ is a quasi-$p$ group and hence $A=S^n$ is quasi-$p$.  Then by Pop's result \cite[Theorem~B]{pop} we get that the embedding problem $(\Pi\to \Pi/L, \alpha\colon A\wr_{G_0} G \to G)$ is properly solvable; let $\psi\colon \Pi \to A\wr_{G_0} G$ be a proper solution. If we take $U=\Pi$ above instead of the index $p$ subgroup $\tilde{\Pi}$ of $\Pi$, then we have $\tilde{\mathcal{E}}=\mathcal{E}$, $\tilde{M}=M$, etc. In particular \eqref{it:1tilde}-\eqref{it:3tilde} follows directly from \eqref{it:1}-\eqref{it:3}. Then the rest of the proof can be applied mutatis mutandis to get that $\psi$ induces a solution $\nu$ of $\mathcal{E}_1$, as needed by the reduction steps. 

\subsubsection*{Case B} Assume $p\nmid |S|$. 
We note that the above proof uses the assumption $[M_2:M_2\cap M] =[M_2 M:M]\neq p$ only to ensure that \eqref{it:2} is satisfied which in turn is only used when $M_2\not\leq U$. Thus if $M_2\leq U$, the above can be applied to get that $\mathcal{E}_1$ is properly solvable, and we are done. 

We thus assume that $[M_2:M_2\cap M] =[M_2 M:M]= p$ and $M_2\not\leq U$. Thus, since $[\Pi:U]=p$ we have $[M_2:M_2\cap U]=p$. Since $M\leq P_g(C)\leq U$ and since $[M_2:M_2\cap M]=p$ we have $M_2\cap M =M_2\cap U$. In particular $U/(M_2\cap U) \cong \Pi/M_2$ hence $U/M_2$ is a quotient of $\Pi$.

Let $X$ be a completion of $C$, let $g_{\Pi}$ be the genus of $X$ and $r+1 = |X\smallsetminus C|$. Let $\nu$ be the maximal positive integer such that $S^\nu$ is a generated by $2g+r$ elements. Then $\nu$ is the maximal integer such that $S^\nu$ is a quotient of $\Pi$, as a prime-to-$p$ group. 

We note that in  the choice of $U$, the genus $g_U$ of the completion of the corresponding curve can be taken such that $S^{\nu+n}$ is generated by $2g_U$ elements. Since the prime-to-$p$ quotient of $U$ is free of rank at least $2 g_U$, there exists an epimorphism $\psi \colon U \to S^{\nu+n}$. Let $\Gamma = \psi(M_2\cap U)$. 

Since $M_2\cap U\lhd U$ it follows that $\Gamma \normal S^{\nu+n}$. Thus $\Gamma\cong S^{\nu_1}$ and $S^{\nu+n} = \Gamma\times \Lambda$, where $\Lambda\cong S^{\nu_2}$ and $\nu_1+\nu_2=\nu+n$. We have $\Lambda \cong \psi(U/(M_2\cap U))$, hence $\Lambda$ is a quotient of $U/(M_2\cap U)\cong \Pi/M_2$, and hence of $\Pi$. Thus $\nu_2\leq \nu$, so $\nu_1\geq  n$. 

Let $\phi\colon U \to \Gamma$ be the composition of $\psi$ with the projection onto $\Gamma$. Then $\phi(M_2\cap U)=\psi(\Gamma\times 1)=\Gamma$. Since $M_2\cap U \leq M \leq U$ we get that $\phi(M) = \Gamma$. This finishes the proof because $\Gamma\cong S^{\nu_1}$ with $\nu_1\geq n$. 
\qed

\subsection{Proof of Corollary~\ref{cor:LD}}\label{pf:cor}

 Let $k$ be a countable algebraically closed field of characteristic $p>0$, let $C$ be an affine $k$-curve, let $\Pi=\pi_1(C)$. Let $N$ be a normal subgroup of infinite index and let $M$ be a proper open subgroup of $N$ of index $\neq p$ that is contained in $P_g(C)$ for some $g\geq 0$. We have to prove that $M$ is free. 
 
 Let $U$ be an open normal subgroup of $\Pi$ such that $N\cap U\leq M$. Then $M$ is in the $\Pi$-diamond determined by $N$ and $U$. Note that $[\Pi:M]=\infty$, so $[UM:M]=\infty$. Thus if $[N:M]\neq p$, then $M$ is free by Part~\eqref{part:main_b} of Theorem~\ref{thm:main} and we are done. 
 
 If $N\leq P_g(C)$, then $M$ is in the $P_{g}(C)$-diamond determined by $U\cap P_{g}(C)$ and $N$. Hence $M$ is free by the diamond theorem for free profinite groups.
 
 If $N\not\leq P_g(C)$ and $[N:M]=p$, then $M=P_g(C)\cap N$ (because $M\leq P_g(C)\cap N \lneqq N$). Thus $M$ is a normal subgroup of the free profinite group $P_g(C)$. By Part~\eqref{part:main_a} of Theorem~\ref{thm:main}, $S^\infty$ is a quotient of $M$, for every finite simple group $S$. Thus by Melnikov's theorem $M$ is free. \qed

\bibliographystyle{amsplain}
\providecommand{\bysame}{\leavevmode\hbox to3em{\hrulefill}\thinspace}
\providecommand{\MR}{\relax\ifhmode\unskip\space\fi MR }
\providecommand{\MRhref}[2]{%
  \href{http://www.ams.org/mathscinet-getitem?mr=#1}{#2}
}
\providecommand{\href}[2]{#2}

\end{document}